\renewcommand{\phi}{\varphi}
\theoremstyle{plain}
\newtheorem{theorem}{Theorem}
\newtheorem{lemma}{Lemma}
\newtheorem{proposition}{Proposition}
\theoremstyle{definition}
\newtheorem{definition}{Definition}
\DeclareMathOperator{\sgn}{sgn}
\DeclareMathOperator{\trace}{trace}
\begin{document}
\title{A cospectral family of graphs for the normalized~Laplacian found by toggling}
\author{Steve Butler\thanks{Dept.\ of Mathematics, Iowa State University, Ames, IA 50011, USA\newline ({\tt \{butler,keheysse\}@iastate.edu})}~\thanks{Partially supported by an NSA Young Investigator Grant} \and Kristin Heysse\footnotemark[1]}

\maketitle

% \author{Steve Butler\fnref{NSA}}
% \ead{butler@iastate.edu}
% \address{Dept.\ of Mathematics, Iowa State University, Ames, IA 50011, USA}
% \author{Kristin Heysse\corref{cor1}}
% \ead{keheysse@iastate.edu}
% \address{Dept.\ of Mathematics, Iowa State University, Ames, IA 50011, USA}

% \cortext[cor1]{Corresponding author}
% \fntext[NSA]{Partially supported by an NSA Young Investigator Grant.}

\begin{abstract}
We give a construction of a family of (weighted) graphs that are pairwise cospectral with respect to the normalized Laplacian matrix, or equivalently probability transition matrix.  This construction can be used to form pairs of cospectral graphs with differing number of edges, including situations where one graph is a subgraph of the other.  The method used to demonstrate cospectrality is by showing the characteristic polynomials are equal.
\end{abstract}

% \begin{keyword}
% cospectral graphs \sep normalized Laplacian \sep toggling \sep characteristic polynomial
% \end{keyword}

%\maketitle

\section{Introduction}\label{sec:introduction}
Spectral graph theory studies the relationship between the structure of a graph and the eigenvalues of a particular matrix associated with that graph. There are several matrices that are commonly studied, each with merits and limitations. These limitations exist because graphs can be constructed which have the same spectrum with respect to the matrix and are fundamentally different in some structural aspect. Such graphs are called \emph{cospectral}.

There are many possible matrices to consider, and the matrix we consider in this paper is the normalized Laplacian (see \cite{BC,Chung}). The rows and columns of this matrix are indexed by the vertices, and for a simple graph the matrix is defined as follows:
\[ 
\mathcal{L}(i,j)=\left\{\begin{array}{c@{\qquad}l} 
1 & \text{if $i=j$, and vertex $i$ is not isolated};\\[5pt] 
{\displaystyle\frac{-1}{\sqrt{d_i d_j}}} & \text{if } i {\sim} j; \\[5pt]
0 & \text{otherwise;} \end{array} \right. 
\]
where $d_i$ is the degree of vertex $i$.

In this paper we want to look at the more general setting of edge-weighted graphs, i.e., there is a symmetric, non-negative weight function, $w(i,j)$ on the edges.  The degree of a vertex now corresponds to the sum of the weights of the incident edges, i.e., $d_i=\sum_{i{\sim}j}w(i,j)$.  The normalized Laplacian for weighted graphs is defined in the following way:
\[ 
\mathcal{L}(i,j)=\left\{ \begin{array}{c@{\qquad}l} 
1 & \text{if $i=j$, and vertex $i$ is not isolated};\\[5pt] {\displaystyle\frac{-w(i,j)}{\sqrt{d_i d_j}}} & \text{if }i {\sim} j;\\[5pt] 
0 & \text{otherwise.}  \end{array} \right.
\] 
(A simple graph corresponds to the case where $w(i,j)\in\{0,1\}$ for all $i,j$.)  We note that when the graph has no isolated vertices, $\mathcal{L}$ can be written as $\mathcal{L}=D^{-1/2}(D-A)D^{-1/2}$, where $A_{i,j}=w(i,j)$ and $D$ is the diagonal degree matrix.  Finally, we point out that this matrix is connected with the probability transition matrix $D^{-1}A$ of a random walk.  In particular, two graphs with no isolated vertices are cospectral for $\mathcal{L}$ if and only if they are cospectral for $D^{-1}A$.

There has been some interest in the construction of cospectral graphs for the normalized Laplacian.  Cavers \cite{Cavers} showed that a restricted variation of Godsil-McKay switching (see \cite{GM}) preserves the spectrum, while Butler and Grout \cite{BG} showed that gluing in two different special bipartite graphs into some arbitrary graph resulted in a pair of cospectral graphs.  In both cases, the operation preserved the number of edges in the graph. 

On the other hand, it is possible for graphs with differing number of edges to be cospectral with respect to the normalized Laplacian.  The classic example of this is complete bipartite graphs $K_{p,q}$ which have spectrum $\{0,1^{(p+q-2)},2\}$ (here the exponent is indicating multiplicity).  For example, the (sparse) star $K_{1,2n-1}$ is cospectral with the (dense) regular graph $K_{n,n}$.  Until recently, this was the \emph{only} known construction of cospectral graphs with differing number of edges.  Butler and Grout \cite{BG} gave some examples of small graphs found by exhaustive computation that differ in the number of edges, including some where one graph was a subgraph of the other.  Butler \cite{twins} expanded on this example to form an infinite family and showed how to construct many pairs of bipartite graphs which were cospectral.

In this paper we introduce a new construction of cospectral graphs for the normalized Laplacian which can differ in the number of edges.  The basic idea is to form a ring of linked modules, and then a similar graph where we interchange the role of two of the modules (what we term ``toggling'').  The resulting pair of graphs are cospectral with respect to the normalized Laplacian.  An example of this construction is shown in Figure~\ref{firstexample}.  Note that the left graph is a subgraph of the right graph.

\begin{figure}[htb]
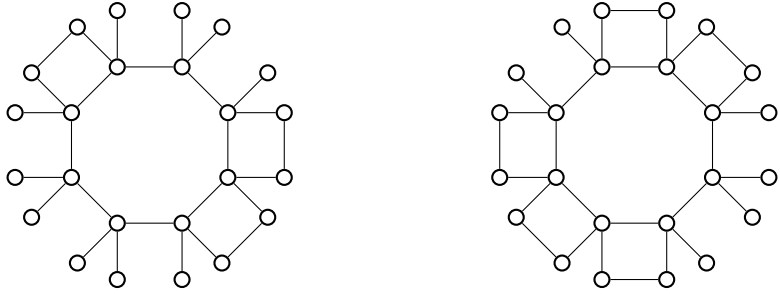

\centering
\picAAA
\caption{A pair of cospectral graphs for $\mathcal{L}$ related by toggling}
\label{firstexample}
\end{figure}

In Section~\ref{Construction}, we give a formal description of this family, of toggling, and state the main result.  In Section~\ref{sec:charpoly}, we show how to compute the characteristic polynomial of the normalized Laplacian by using decompositions.  We then break the decompositions of a graph in our family into those which contain a ``long'' cycle (see Section~\ref{sec:long}) and those which do not (see Section~\ref{sec:short}), and in particular conclude the characteristic polynomials are equal so the graphs must be cospectral.  In Section~\ref{sec:weighted} we show how to go from weighted graphs to simple graphs which are cospectral with respect to the normalized Laplacian.

%Many, if not most, approaches to establish cospectrality rely on showing that a small perturbation in the graph corresponds to a small, controllable perturbation in the eigenvectors and hence eigenvalues are preserved.  This is \emph{not} the case in this construction, which is why we consider the characteristic polynomials.  Also, while we show that the characteristic polynomials are equal, we will never explicitly compute one.  Instead, we will show that the method to determine these polynomials will produce the same answer for a pair of cospectral graphs.

\section{Construction}\label{Construction}
Our family of graphs are formed as a ring composed of three different types of (weighted) modules: the \emph{path} on four vertices, the \emph{cycle} on four vertices, and the \emph{edge} on two vertices, which we label as {\tt P}, {\tt C}, and  {\tt E}, respectively.  The modules are shown in Figure~\ref{modules} where we have marked the edge weights using a parameter $k$ where $k>0$ is for now arbitrary.  Each module has special vertices marked with ``$+$'' and ``$-$'' which can be thought of as poles of a magnet to indicate how consecutive modules will connect.  In particular, the ``$+$'' vertex on one module will connect with the ``$-$'' vertex on the next module.  We will refer to these two special vertices as the \emph{signed} vertices.

\begin{figure}[htb]
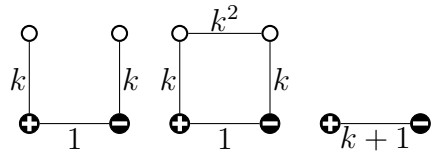

\centering
\picBBB
\caption{The {\tt P}, {\tt C}, and {\tt E} modules, respectively}
\label{modules}
\end{figure}

A graph in our family is formed by connecting $\tau$ modules together in a cycle.  In particular, such graph can be associated with a word using the letters {\tt P}, {\tt C} and {\tt E}.  As an example, starting with the top module and reading clockwise, the two graphs shown in Figure~\ref{firstexample} (where $k=1$) have the words {\tt PPCCPPPC} and {\tt CCPPCCCP}.  Note that given a graph in our family there are many possible words, i.e., we can choose any module to start and any possible direction.  On the other hand, given a word, there is a unique graph.

\begin{definition}
Given a word $W=\ell_1\ell_2\ldots\ell_\tau$ where $\ell_i\in\{{\tt P},{\tt C},{\tt E}\}$ and $\tau\ge3$.  Then $G(W)$ is the graph obtained by connecting the corresponding $\tau$ modules in cyclic order as indicated by the word where consecutive modules connect on the signed vertices, and where the final module will connect to the first module.
\end{definition}

We note that the two words we constructed for the graphs in Figure~\ref{firstexample} are related by interchanging the roles of ${\tt P}$ and ${\tt C}$.  This will generalize as follows.

\begin{definition}
Given a cyclic word $W$ composed of the letters {\tt P}, {\tt C}, and {\tt E}.  Then the \emph{toggling} of $W$ is $W^T$, the word formed by taking $W$ and replacing every {\tt P} by {\tt C} and every {\tt C} by {\tt P}. The occurrences of {\tt E} are unchanged.
\end{definition}

The motivation for the use of the word ``toggling'' is to notice that the difference between $G(W)$ and $G(W^T)$ is adding or removing the edge on a module which goes between the non-signed vertices.   In essence, we are switching the states of these edges.

We can now state our main result.

\begin{theorem}\label{thm:main}
For a word $W$ of length at least three using the letters {\tt P}, {\tt C}, and {\tt E}, $G(W)$ and $G(W^T)$ are cospectral with respect to the normalized Laplacian.
\end{theorem}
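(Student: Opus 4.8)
The plan is to compute the characteristic polynomial $p_G(t)=\det(tI-\mathcal L_G)$ of each graph directly and show $p_{G(W)}=p_{G(W^T)}$. Writing $\mathcal L=I-D^{-1/2}AD^{-1/2}$, I would expand $p_G(t)=\det\bigl((t-1)I+D^{-1/2}AD^{-1/2}\bigr)$ combinatorially as a sum over \emph{decompositions}: $p_G(t)=\sum_{\mathcal D}(t-1)^{\,n-|V(\mathcal D)|}\operatorname{wt}(\mathcal D)$, where $\mathcal D$ ranges over vertex‑disjoint collections of edges and cycles of length $\ge 3$, an edge $\{i,j\}$ contributes $-w(i,j)^2/(d_id_j)$, a cycle $C$ contributes $2(-1)^{|C|-1}\prod_{e\in C}w(e)/\prod_{v\in C}d_v$, and $V(\mathcal D)$ is the covered vertex set (this is the Section~\ref{sec:charpoly} machinery). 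The structural fact to exploit is that consecutive modules of $G(W)$ share exactly one vertex, a signed/junction vertex, and in every module type that vertex has degree $2k+2$; hence a decomposition couples different modules only through how it uses these junctions.

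Next I would classify decompositions by their cyclic part. Because the non‑signed vertices of a {\tt P} (resp.\ {\tt E}) module are pendant to a junction, and those of a {\tt C} module form a length‑two path joining the two junctions in parallel with the weight‑$1$ edge, the only cycles in $G(W)$ are the short $4$‑cycle inside a {\tt C} module (using both of that module's junctions) and ``long'' cycles that traverse the whole ring, choosing at each {\tt C} module either the direct weight‑$1$ edge or a detour through its two non‑signed vertices. Consequently a decomposition has at most one long cycle, and if it has one it has no short cycle and the long cycle covers every junction. I would treat this long‑cycle case first: the contribution of a long cycle factors over modules as a product of local transmission factors (the cycle‑edge weights and inverse degrees it picks up, times the polynomial from any non‑signed vertices it frees, which in a direct‑route {\tt C} may themselves form the weight‑$k^2$ matching edge). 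A computation shows this local factor equals $(t-1)^2/(2k+2)$ for both {\tt P} and {\tt C}, and $1/(2k+2)$ for {\tt E}, while the global prefactor $2(-1)^{\tau+2d-1}=2(-1)^{\tau-1}$ does not depend on the number of detours; so the total long‑cycle contribution depends only on the multiset of module types and is unchanged by toggling.

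For decompositions with no long cycle (Section~\ref{sec:short}), $\mathcal D$ restricts to an independent local decomposition on each module, with the single constraint that each junction is used by at most one of its two neighbouring modules. I would package this as a transfer matrix around the ring: let $M_{\tt P},M_{\tt C},M_{\tt E}$ be matrices indexed by the state of a junction (``already claimed by the previous module'' vs.\ ``free''), whose entries record — for each admissible local decomposition of the module, sorted by which of its two signed vertices it covers (the cases ``neither\,/\,$+$\,/\,$-$\,/\,${+}/{-}$'') — the normalized local weight together with the $(t-1)$-bookkeeping for unclaimed junctions; these are precisely the data in Tables~\ref{table:P},~\ref{table:C},~\ref{table:E}. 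Then the no‑long‑cycle part of $p_{G(W)}(t)$ equals $(t-1)^{N}\operatorname{Tr}\bigl(M_{\ell_1}M_{\ell_2}\cdots M_{\ell_\tau}\bigr)$ for an explicit exponent $N$ depending only on $n$ and $\tau$. Toggling turns this into $\operatorname{Tr}\bigl(M_{\ell_1^T}\cdots M_{\ell_\tau^T}\bigr)$, so it suffices to exhibit an invertible $Q$ (one may take $Q^2=I$) with $QM_{\tt P}Q^{-1}=M_{\tt C}$, $QM_{\tt C}Q^{-1}=M_{\tt P}$, and $QM_{\tt E}Q^{-1}=M_{\tt E}$; then $M_{\ell_1^T}\cdots M_{\ell_\tau^T}=Q(M_{\ell_1}\cdots M_{\ell_\tau})Q^{-1}$ has the same trace. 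Combined with the previous paragraph this yields $p_{G(W)}=p_{G(W^T)}$, hence cospectrality.

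I expect the main obstacle to be the two algebraic identities that make toggling invisible, and both depend on the exact weights $1,k,k^2$ and degrees $k,k(k+1),2k+2$ having been tuned correctly. The first is the cancellation ``{\tt C}-detour $+$ {\tt C}-matching-edge $=$ {\tt P}-unused-pendants'' that equates the {\tt P} and {\tt C} long‑cycle transmission factors; here one must track cycle versus matching‑edge contributions and the $2(-1)^{|C|-1}$ sign with care. The second is producing the conjugating involution $Q$ (equivalently, checking directly that $\operatorname{Tr}(M_{\ell_1}\cdots M_{\ell_\tau})$ is invariant under swapping every $M_{\tt P}\leftrightarrow M_{\tt C}$); this is a bounded computation with small matrices but is not obvious a priori and uses the specific entries of Tables~\ref{table:P} and~\ref{table:C} in an essential way. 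Once these identities are established, the decomposition bookkeeping and the reduction to a transfer‑matrix trace are routine.
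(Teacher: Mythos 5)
Your proposal follows essentially the same route as the paper: the decomposition expansion of $\det\bigl((t-1)I+D^{-1/2}AD^{-1/2}\bigr)$, the split into long-cycle and module-local decompositions, the cancellation that makes {\tt P} and {\tt C} contribute identically to the long-cycle sum, and a transfer-matrix trace made toggle-invariant by conjugation with a matrix intertwining the {\tt P} and {\tt C} transfer matrices and commuting with the {\tt E} one (the paper exhibits this matrix, its $U$, explicitly). The only quibble is your long-cycle transmission factor for {\tt E}, which should be $1/2$ rather than $1/(2k+2)$ because the {\tt E} edge carries weight $k+1$; this is immaterial since toggling fixes {\tt E} modules.
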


We note that if $W$ does not contain the same number of occurrences of {\tt P} and {\tt C}, then the number of edges in $G(W)$ and $G(W^T)$ will differ and are clearly non-isomorphic. Among other things, we can construct cospectral simple graphs which differ by exactly $m$ edges by setting $k=1$ and using a word in $\tt{P}$ and $\tt{C}$ where there are $m$ more instances of $\tt{C}$ than of $\tt{P}$.   There are also some special words $W$ so that $G(W)$ is a subgraph of $G(W^T)$.  One example of this behavior is $W={\tt CC}\ldots{\tt C}$ and $W^T={\tt PP}\ldots{\tt P}$, though others exist (see Figure~\ref{firstexample}). 

\section{Computing the characteristic polynomial}\label{sec:charpoly}
Our approach will involve showing the characteristic polynomials of $G(W)$ and $G(W^T)$ are equal.  We  start by determining how to compute the characteristic polynomial by the use of generalized cycle decompositions (see \cite{Brualdi}). For an $n \times n$ matrix $M=[m_{i,j}]$,
\[
\det(M)=\sum_{\sigma \in S_n} \sgn(\sigma)\underbrace{m_{1,\sigma(1)}m_{2,\sigma(2)}\cdots m_{n,\sigma(n)}}_{:=w_M(\sigma)}=\sum_{\sigma \in S_n} \sgn(\sigma) w_M(\sigma). 
\]

Let $G_M$ denote the digraph which corresponds to $M$, meaning it has $i{\to}j$ if and only if $m_{i,j}\ne0$. We can consider a permutation $\sigma$ which contributes a nonzero term to $\det(M)$. The factors of $w_M(\sigma)$ correspond to $n$ edges such that each vertex has in-degree and out-degree equal to one, as each vertex will appear as the first and second index somewhere in $w_M(\sigma)$.  Such a collection of edges is a \emph{generalized cycle decomposition} of $G_M$.  There are three possible structures in a generalized cycle decomposition: loops (a directed edge that goes into and out of the same vertex), edges (pairs of directed edges $i{\to}j$ and $j{\to}i$), and longer directed cycles.  More generally, if we think of loops and edges as cycles of length one and two, respectively, then a generalized cycle decomposition is a collection of disjoint cycles so that every vertex is in exactly one cycle.

In the case when the matrix $M$ is symmetric, many of these generalized cycle decompositions will contribute the same factor to the determinant. For example, changing the orientation on a long cycle gives a different decomposition but does not change $\sgn(\sigma)w_M(\sigma)$.  With this in mind we consider decompositions.

\begin{definition}
Let $G$ be an undirected (weighted) graph.  Then a \emph{decomposition}, $D$, is a subgraph consisting of disjoint edges and cycles.
\end{definition}

When $M$ is symmetric, we can treat $G_M$ as an undirected graph.  Every generalized cycle decomposition now corresponds to a unique decomposition, $D$, by removing loops and dropping the orientation on the long cycles.  Conversely, if we let $s=s(D)$ denote the number of cycles of length at least three in the decomposition $D$, then each decomposition corresponds to a collection of $2^s$ different generalized cycle decompositions.  Namely, any vertex not in an edge or a cycle has a loop added, edges become cycles of length two, and each of the $s$ cycles of length at least $3$ have one of two possible orientations chosen. 

If we let $e(D)$ count the number of cycles in the decomposition which have an even number of vertices (including edges), and $F(D)$ be the set of isolated edges in the decomposition $D$, then we have the following result.

\begin{proposition}\label{prop:charpoly}
Let $G$ be a weighted graph on $n$ vertices without loops or isolated vertices.  Then the characteristic polynomial of the normalized Laplacian matrix is
\[
p(t)=\sum_{D}(-1)^{e(D)}2^{s(D)}(t-1)^{n-|V(D)|}\frac{ \prod_{\{i,j\}\in E(D)}w(i,j)\prod_{\{i,j\}\in F(D)}w(i,j)}{\prod_{i\in V(D)}d_i}
\]
where the sum runs over all decompositions $D$ of the graph $G$.
\end{proposition}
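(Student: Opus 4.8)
The plan is to start from the generalized cycle decomposition expansion of $\det(tI - \L)$ and systematically collapse the sum over permutations $\sigma \in S_n$ into a sum over undirected decompositions $D$, tracking exactly how each factor in the statement arises. Write $M = tI - \L$, so that $M(i,i) = t-1$ for every non-isolated vertex $i$ (using that $\L(i,i)=1$), and $M(i,j) = w(i,j)/\sqrt{d_i d_j}$ for $i \sim j$. For a permutation $\sigma$ contributing a nonzero term, decompose its cycle structure into loops, transpositions (length-two cycles), and long cycles of length $\ge 3$; the discussion preceding the proposition already tells us that forgetting loops and orientations sends this to a unique undirected decomposition $D$, and that a given $D$ is the image of exactly $2^{s(D)}$ permutations. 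So the first step is to fix $D$ and compute the common value of $\sgn(\sigma)\,w_M(\sigma)$ over those $2^s$ preimages, then multiply by $2^s$.

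The second step is the bookkeeping of the three factor-types. Each vertex $i \notin V(D)$ sits in a loop of $\sigma$ and contributes $M(i,i) = t-1$; there are $n - |V(D)|$ such vertices, giving the $(t-1)^{n-|V(D)|}$ factor. A vertex $i \in V(D)$ that lies on some edge or cycle contributes an off-diagonal entry $M(i,j) = w(i,j)/\sqrt{d_i d_j}$; collecting these over all of $D$, each vertex in $V(D)$ appears exactly twice as an endpoint (once as row index, once as column index), so the $\sqrt{d_i d_j}$ denominators multiply to $\prod_{i \in V(D)} d_i$ — this is the denominator in the formula. For the numerator, I need to be careful: an isolated edge $\{i,j\} \in F(D)$ corresponds in $\sigma$ to the transposition $(i\,j)$, contributing $M(i,j)M(j,i) = w(i,j)^2/(d_i d_j)$, i.e. two copies of $w(i,j)$; a cycle of length $\ge 3$ contributes each of its edge weights exactly once. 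That explains why $F$-edges are weighted with an extra factor relative to cycle edges, and gives the $\prod_{E(D)} w(i,j)\prod_{F(D)} w(i,j)$ numerator (reading $E(D)$ as all edges of all cycles and isolated edges, so an isolated edge is counted once in $E(D)$ and once in $F(D)$, for total exponent two; edges inside long cycles appear only in $E(D)$).

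The third step is the sign. We have $\det(M) = \sum_\sigma \sgn(\sigma) w_M(\sigma)$, and $\sgn(\sigma) = (-1)^{n - (\text{number of cycles of }\sigma)}$. Loops don't affect parity; a transposition contributes $-1$; a cycle of length $\ell$ contributes $(-1)^{\ell-1}$, which is $-1$ exactly when $\ell$ is even. So the sign from the part of $\sigma$ over $V(D)$ is $(-1)^{(\text{number of even-length cycles and transpositions in }\sigma)}$, and since transpositions are themselves even-length cycles, this is precisely $(-1)^{e(D)}$; the two orientation choices on a long cycle don't change its length-parity, so all $2^s$ preimages share this sign. Assembling: fixed $D$ contributes $2^{s(D)}(-1)^{e(D)}(t-1)^{n-|V(D)|}\cdot(\text{numerator})/(\text{denominator})$, and summing over all $D$ gives $p(t)$.

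The main obstacle is not any single deep idea but getting the edge-counting conventions airtight — in particular pinning down that $E(D)$ in the statement means "all edges appearing in $D$, including those of isolated edges," so that an isolated edge gets weight $w(i,j)^2$ via the combined $E(D)$ and $F(D)$ products, while an edge inside a long cycle gets weight $w(i,j)^1$; and simultaneously confirming the $2^s$ orientation count interacts correctly with the sign (it does, since parity is orientation-blind) and with the weight (it does, since reversing a cycle's orientation permutes the same set of matrix entries). I would also double-check the degenerate small cases — a length-two "cycle" is an isolated edge, and a loop in $\sigma$ is never a loop in $G$ since $G$ is loopless — to be sure the $(t-1)$ power is attributed only to vertices genuinely outside $V(D)$.
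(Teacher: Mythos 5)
Your proposal is correct and follows essentially the same route as the paper: expand $\det\bigl((t-1)I+D^{-1/2}AD^{-1/2}\bigr)$ over generalized cycle decompositions, group the $2^{s(D)}$ oriented preimages of each undirected decomposition $D$, and account for loops giving $(t-1)$, in/out-degree one giving $\prod_{i\in V(D)}d_i$ in the denominator, isolated edges being traversed twice, and the sign $(-1)^{e(D)}$. Your treatment of the sign and of the $E(D)$ versus $F(D)$ counting convention is, if anything, more explicit than the paper's.
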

\begin{proof}
The characteristic polynomial with respect to the normalized Laplacian can be written as
\begin{align*}
p(t) &=\det(tI-\mathcal{L})\\
&=\det\big(tI-D^{-1/2}(D-A)D^{-1/2}\big)\\
&=\det\big(\underbrace{(t-1)I+D^{-1/2}AD^{-1/2}}_{=M}\big).
\end{align*}
The graph $G_M$ (ignoring loops) has the same edges and non-edges as $G$, and so we can use decompositions to compute the determinant.

In particular, every decomposition of $G$ will relate to $2^{s(D)}$ generalized cycle decompositions.  For each such generalized cycle decomposition corresponding to a permutation $\sigma$, we have $\sgn(\sigma)=(-1)^{e(D)}$.  We will have $n-|V(D)|$ loops which each contribute $(t-1)$.  The non-loop edges $i{\to}j$ will contribute $w(i,j)/\sqrt{d_id_j}$.  Now we recall that each vertex in a generalized cycle decomposition has one edge coming in and one edge going out, and therefore for each vertex $i$ in $V(D)$ we will have $\sqrt{d_i}$ occurring twice in the denominator giving us the $d_i$.  Finally, for cycles of length three or greater we only use each edge once in the generalized cycle decomposition, but for cycles of length two we use the same edge for both directions and so we use the edge twice.
\end{proof}

%do the diagrams need edge weights here? or k=1? add 2 to the E module?
\section{Decompositions of $G(W)$ with a long cycle}\label{sec:long}
Proposition~\ref{prop:charpoly} shows that we can determine the characteristic polynomial by looking at decompositions of the graph.  In this section we will consider the collection of decompositions of a graph $G(W)$ which contain a long cycle, i.e., a cycle which passes through all of the signed vertices in $G(W)$.  We denote the set of these decompositions as $L$.

\begin{lemma}\label{lem:long}
Let $W$ be a word of length $\tau$ with $\ell$ occurrences of {\tt P} and $m$ occurrences of {\tt C}.  Then for $G(W)$ we have
\begin{multline*}
\sum_{D\in L}(-1)^{e(D)}2^{s(D)}(t-1)^{n-|V(D)|}\frac{ \prod_{\{i,j\}\in E(D)}w(i,j)\prod_{\{i,j\}\in F(D)}w(i,j)}{\prod_{i\in V(D)}d_i}\\
=\frac{(-1)^{\tau-1}(t-1)^{2(m+\ell)}}{2^{\tau-1}(k+1)^{m+\ell}}
\end{multline*}
\end{lemma}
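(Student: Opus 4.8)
The plan is to classify the decompositions in $L$ completely, evaluate each one's contribution via Proposition~\ref{prop:charpoly} as a product of per-module factors, and then add up. Begin by recording the relevant structure of $G(W)$. Write $v_1,\dots,v_\tau$ for the signed vertices in cyclic order (indices taken modulo $\tau$), so that the $i$-th module is attached to the rest of the graph precisely at $v_i$ and $v_{i+1}$. Each module contributes exactly $k+1$ to the degree of each of its two signed vertices: a {\tt P} contributes $k$ from the pendant edge and $1$ from the middle edge, a {\tt C} contributes $k$ from the edge to its nearer non-signed vertex and $1$ from the edge joining its two signed vertices, and an {\tt E} contributes its single edge of weight $k+1$. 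Since every $v_i$ lies in exactly two modules, $d_{v_i}=2(k+1)$ for all $i$. The two non-signed vertices of a {\tt P} module are leaves of degree $k$; the two non-signed vertices of a {\tt C} module each have degree $k+k^2=k(k+1)$. Finally $n=\tau+2(\ell+m)$.

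Next I would classify $L$. Because the modules meet only at signed vertices and a simple cycle cannot revisit a vertex, a long cycle $\gamma$ must cross module $i$ along a single path from $v_i$ to $v_{i+1}$ (a short case check rules out the possibility that $\gamma$ skips a module entirely, using that leaves and the lone $4$-cycle of a {\tt C} module cannot be incorporated otherwise). In an {\tt E} or {\tt P} module this path must be the edge $v_iv_{i+1}$, of weight $k+1$ respectively $1$; in a {\tt C} module there are exactly two options: the \emph{direct} edge $v_iv_{i+1}$ of weight $1$, or the \emph{detour} through both non-signed vertices $a_i,b_i$, using the three edges $v_ia_i$, $a_ib_i$, $b_iv_{i+1}$ of weights $k$, $k^2$, $k$. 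Once $\gamma$ is fixed, the remaining vertices must be covered by disjoint edges and cycles avoiding $\gamma$; this forces both leaves of every {\tt P} module to be uncovered, leaves nothing to decide in a {\tt C} module used in detour mode (all four of its vertices already lie on $\gamma$), and in a {\tt C} module used in direct mode leaves exactly two possibilities for the pair $a_i,b_i$: either both uncovered, or joined by the isolated edge $a_ib_i$ of weight $k^2$ (the only other neighbours of $a_i$ and $b_i$, namely $v_i$ and $v_{i+1}$, lie on $\gamma$). Hence a decomposition in $L$ is the same thing as a choice, independently for each of the $m$ copies of {\tt C}, of one of three local states: detour, direct-with-the-edge-$a_ib_i$, or direct-without-it.

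Then I would compute the contribution of such a decomposition $D$, writing $j$ for the number of {\tt C} modules in detour mode and $r$ for the number in direct-with-edge mode. One checks that the only cycle of length at least three is $\gamma$, of length $\tau+2j$, so $s(D)=1$; the even cycles of $D$ are $\gamma$ (of length $\equiv\tau\pmod 2$) together with the $r$ isolated two-cycles, so $(-1)^{e(D)}=(-1)^{\tau-1}(-1)^{r}$; $|V(D)|=\tau+2j+2r$, hence $(t-1)^{n-|V(D)|}=(t-1)^{2\ell}(t-1)^{2(m-j-r)}$; the weight products are $\prod_{E(D)}w=(k+1)^{\tau-\ell-m}k^{4j}k^{2r}$ and $\prod_{F(D)}w=k^{2r}$; and $\prod_{i\in V(D)}d_i=\bigl(2(k+1)\bigr)^{\tau}\bigl(k(k+1)\bigr)^{2j}\bigl(k(k+1)\bigr)^{2r}$. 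Multiplying and simplifying, the contribution of $D$ equals
\[
\frac{(-1)^{\tau-1}(t-1)^{2\ell}}{2^{\tau-1}(k+1)^{\ell+m}}\cdot(-1)^{r}(t-1)^{2(m-j-r)}\frac{k^{2j+2r}}{(k+1)^{2j+2r}},
\]
and the second factor splits as a product over the {\tt C} modules, each one contributing $(t-1)^{2}$ when it is direct-without-edge, $-k^{2}/(k+1)^{2}$ when it is direct-with-edge, and $k^{2}/(k+1)^{2}$ when it is detour.

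Finally, summing over all of $L$ means summing over all state assignments, and since the per-module factors multiply and all $m$ copies of {\tt C} behave identically,
\[
\sum_{D\in L}\bigl(\text{contribution of }D\bigr)=\frac{(-1)^{\tau-1}(t-1)^{2\ell}}{2^{\tau-1}(k+1)^{\ell+m}}\Bigl((t-1)^{2}-\frac{k^{2}}{(k+1)^{2}}+\frac{k^{2}}{(k+1)^{2}}\Bigr)^{\!m}=\frac{(-1)^{\tau-1}(t-1)^{2(m+\ell)}}{2^{\tau-1}(k+1)^{m+\ell}},
\]
the detour and direct-with-edge terms cancelling, which is exactly the asserted identity. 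I expect the main obstacle to be the local analysis in the middle two steps---pinning down exactly which configurations a long cycle permits in each type of module, and then carrying the quantities $s(D)$, $e(D)$, $|V(D)|$, the two edge-weight products, and the degree product through uniformly---rather than the concluding cancellation, which is immediate once the bookkeeping is set up.
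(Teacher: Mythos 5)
Your proposal is correct and follows essentially the same route as the paper: classify the long-cycle decompositions by the three possible local states of each {\tt C} module, express each contribution as a product of per-module factors, and sum so that the detour and direct-with-edge terms cancel to leave $(t-1)^{2m}$ (the paper phrases this cancellation via the multinomial theorem, which is the same computation). All of your bookkeeping --- degrees, $s(D)$, $e(D)$, $|V(D)|$, and the weight products --- matches the paper's.
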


\begin{proof}
Knowing we have a long cycle yields a lot of information about the decomposition $D$ in $G(W)$.  In particular, for a module of type ${\tt P}$ or ${ \tt E}$, the decomposition will contain only the edge between the signed vertices.  These are shown in Figure~\ref{xzlongdecomp}, where edge weights have been removed for clarity.

\begin{figure}[htb]
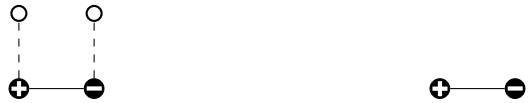

\centering
\picCCC
\caption{Forced decomposition for {\tt P} and {\tt E}, respectively}
\label{xzlongdecomp}
\end{figure}

For a module of type {\tt C} the situation is a more interesting as there are three different options for the decomposition.  Namely, that the long cycle passes only through the signed vertices; the long cycle passes through the signed vertices and there is an edge between the unsigned vertices; the long cycle passes through all of the vertices.  These three possibilities are shown in Figure~\ref{ylongdecomp}.

\begin{figure}[htb]
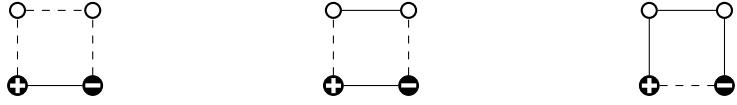

\centering
\picDDD
\caption{Three possible decompositions for {\tt C} in a long cycled decomposition}
\label{ylongdecomp}
\end{figure}

Now suppose that among the $m$ modules of type {\tt C} that precisely $h$ of them are the configuration shown on the left in Figure~\ref{ylongdecomp}; $i$ of them are the configuration shown in the center in Figure~\ref{ylongdecomp}; and $j$ of them are the configuration shown on the right in Figure~\ref{ylongdecomp}.  The choices of which {\tt C} modules behave in which way is arbitrary.  Summing over all the possibilities gives the following.
\begin{multline*}
\sum_{D\in L}(-1)^{e(D)}2^{s(D)}(t-1)^{n-|V(D)|}\frac{ \prod_{\{i,j\}\in E(D)}w(i,j)\prod_{\{i,j\}\in F(D)}w(i,j)}{\prod_{i\in V(D)}d_i}\\
=\sum_{h+i+j=m}\frac{2(-1)^{\tau-1}(k+1)^{\tau-\ell-m}(t-1)^{2\ell}}{\big(2(k+1)\big)^\tau}\times\\
{m\choose h,i,j}\big((t-1)^2\big)^h\bigg({(-1)k^4\over\big(k(k+1)\big)^2}\bigg)^i\bigg({k^4\over\big(k(k+1)\big)^2}\bigg)^j
\end{multline*}

%We have $2^{s(D)}=2$ because the only cycle of length three or more is the long cycle passing through the signed vertices. The $e(D)$ will count the number of occurrences of the center configuration of {\tt C} in Figure~\ref{ylongdecomp} plus possibly the long cycle, which by parity the contribution will be $(-1)^{\tau-1}$.  The $(k+1)^{\tau-\ell-m}$ is the weight of the edges on the long cycle coming from the modules of type {\tt E}.  The $(t-1)^{2\ell}$ accounts for the isolated vertices which are not in the decomposition from the modules of type {\tt P}.  The $\big(2(k+1)\big)^\tau$ is the product of the degrees of the signed vertices (each such vertex has degree $2(k+1)$ as can be seen by noting that in the modules the signed vertices have degree $k+1$ and then we identify two such vertices).  The ${m\choose h,i,j}=m!/(h!i!j!)$ is the multinomial coefficient for how many ways to choose the different module configurations for {\tt C}, while the three remaining factors in the product are the contribution from each local decompositions (i.e., a pair of isolated vertices or the product of the edge weights over product of degrees).

We have $2^{s(D)}=2$ because there is only one cycle of length greater than three, namely the long cycle which contains all the signed vertices. The $e(D)$ will count the number of $\tt{C}$ modules in the middle configuration and possibly the long cycle itself. Regardless of the number of $\tt{C}$ modules in the configuration on the right, the contribution from the long cycle to $(-1)^{e(D)}$ will be $(-1)^{\tau-1}$. Consider first the contributions of isolated vertices and edge weights of the $\tt{P}$ and $\tt{E}$ modules. The $(k+1)^{\tau-\ell-m}$ is the weight of the edges on the long cycle coming from the modules of type {\tt E}.  The $(t-1)^{2\ell}$ accounts for the isolated vertices from the modules of type {\tt P}. Further, the $\big(2(k+1)\big)^\tau$ is the product of the degrees of the signed vertices (each such vertex has degree $2(k+1)$ as can be seen by noting that in the modules the signed vertices have degree $k+1$ and then we identify two such vertices). 

It remains to account for the portions of the decomposotions formed on the $\tt{C}$ modules which are not the signed vertices. The ${m\choose h,i,j}=m!/(h!i!j!)$ is the multinomial coefficient for how many ways to choose the different module configurations for {\tt C}, and the final three factors are the contributions from each configuration formed by accounting for isolated vertices, edge weights, and the degrees of vertices in the decomposition (i.e., a pair of isolated vertices or the product of the edge weights over product of degrees). Notice that for the middle configuration, the contribution to $(-1)^{e(D)}$ has been appropriately grouped. 

Now we can simplify by pulling out the terms which do not depend on the sum and cancelling.  For the terms in the sum we can use the multinomial theorem to simplify. Continuing the above computation, we now have
\begin{align*}
&=\frac{(-1)^{\tau-1}(t-1)^{2\ell}}{2^{\tau-1}(k+1)^{\ell+m}}\bigg((t-1)^2-\frac{k^4}{\big(k(k+1)^2\big)^2}+\frac{k^4}{\big(k(k+1)^2\big)^2}\bigg)^m\\
&=\frac{(-1)^{\tau-1}(t-1)^{2\ell}}{2^{\tau-1}(k+1)^{\ell+m}}(t-1)^{2m}=\frac{(-1)^{\tau-1}(t-1)^{2(m+\ell)}}{2^{\tau-1}(k+1)^{m+\ell}}.\qedhere
\end{align*}
\end{proof}

The important thing to note is that the expression in Lemma~\ref{lem:long} will be the same for $W$ and $W^T$ because $m+\ell$ is invariant under toggling.

\section{Decompositions of $G(W)$ without a long cycle}\label{sec:short}
Any cycle in a decomposition with edges in consecutive modules would have to go through all of the modules to close up.  In particular, if there is not a long cycle in our decomposition $D$, then the decomposition is composed of only edges and $C_4$'s which lie in individual modules.

We consider what decompositions can happen in a single module and how decompositions in consecutive modules interact.  The first task is straightforward to carry out, and in Tables~\ref{table:P}, \ref{table:C}, and \ref{table:E} we show the possible \emph{local} decompositions for each module.  To help facilitate the analysis we have grouped the local decompositions by which signed vertices (if any) are used.

\tableP

\tableC

\tableE

The next part is to understand the transitions between modules, i.e., how local decompositions interact.  We have already grouped the local decompositions by which of the signed vertices are used.  We now note that if signed vertices are used in by a local decompositon in one module, it influences which of the signed vertices are available for use in the next module.  This is indicated by the following transition matrix with rows and columns indexed by subsets of the signed vertices:
\[
Q=\bordermatrix{
&~\emptyset~&~{+}~&~{-}~&{+}/{-}\cr
\emptyset&1&1&1&1\cr
{+}&1&1&1&1\cr
{-}&1&0&1&0\cr
{+}/{-}&1&0&1&0}.
\]
Using $Q$ we can now count the number of ways that we can have decompositions use the signed vertices in the modules for $G(W)$.  This is done using the transfer matrix method (see \cite{GS}), and in particular is equal to the number of closed walks in the directed graph corresponding to $Q$ which have the same length as the length of the word. %  This is known to be $\trace(Q^{\length (W)})$.  
We need to go one step further and for every module add the contribution of the local decomposition.

This final part is done by adding in diagonal weight matrices where the diagonal entries correspond to the contribution of the decomposition for that particular module.  These contributions are found by $(-1)$ raised to the number of even cycles (i.e., edges or $C_4$'s) times the product of the edge weights used in the local decomposition (remembering for an edge to use that edge twice), divided by the product of the degrees of any vertex used in the decomposition.  The only subtle part is handling the vertices which will not be a part of a decomposition in any module.  What we do is assume at the beginning that \emph{every} vertex is isolated and contributes a $(t-1)$ then whenever a vertex becomes a part of the decomposition we divide by $(t-1)$ to correct (the choice of this approach is because signed vertices lie in two modules, hence while it might not be in the decomposition of one module it could be in the decomposition of the other).  When there are several possible decompositions in a given case we add them together to form the entry for the weight matrix. 
%(i.e., the sum is acting as `OR' indicating we have one of several choices and then the product of all of the different possibilities acts as `AND').  
The contributions were previously listed in the tables and become the diagonal entries of the weight matrices.  We therefore have the following weight matrices.
\begin{align*}
X_{\tt P}&=\left(\begin{array}{cccc}
1 & 0 & 0 & 0 \\
0 & \frac{-k}{(t-1)^2(2k+2)} & 0 & 0 \\
0 & 0 & \frac{-k}{(t-1)^2(2k+2)} & 0 \\
0 & 0 & 0 & \frac{k^2-(t-1)^2}{(t-1)^4(2k+2)^2}
\end{array}\right)
\\
X_{\tt C}&=\left(\begin{array}{cccc}
1-\frac{k^2}{(t-1)^2(k+1)^2}  & 0 & 0 & 0 \\
0 & \frac{-k}{2(t-1)^2(k+1)^2} & 0 & 0 \\
0 & 0 & \frac{-k}{2(t-1)^2(k+1)^2} & 0 \\
0 & 0 & 0 & \frac{-1}{4(t-1)^2(k+1)^2}
\end{array}\right) 
\\
X_{\tt E}&=\left(\begin{array}{cccc}
1 & 0 & 0 & 0 \\
0 & 0 & 0 & 0 \\
0 & 0 & 0 & 0 \\
0 & 0 & 0 & \frac{-1}{4(t-1)^2}
\end{array}\right)
\end{align*}

So for the graph $G(\ell_1\ell_2\cdots \ell_\tau)$, we have the following:
\begin{multline}\label{eq:short}
\sum_{D\notin L}(-1)^{e(D)}2^{s(D)}(t-1)^{n-|V(D)|}\frac{ \prod_{\{i,j\}\in E(D)}w(i,j)\prod_{\{i,j\}\in F(D)}w(i,j)}{\prod_{i\in V(D)}d_i}\\
=(t-1)^{|V(G(W))|}\trace(QX_{\ell_1}QX_{\ell_2}\cdots QX_{\ell_\tau}).
\end{multline}

We now focus on rewriting the trace expression in \eqref{eq:short}.  To start we note that $Q=RSR^{-1}$ where
\[
R = \left(\begin{array}{cccc}
1&-1&1&1\\
1&-1&0&0\\
\frac12&1&0&-1\\
\frac12&1&-2&0
\end{array}\right),
\quad\text{and}\quad
S = \left(\begin{array}{cccc}
3&0&0&0\\
0&0&1&0\\
0&0&0&0\\
0&0&0&0
\end{array}\right).
\]
Combining this with $\trace(AB)=\trace(BA)$ we can conclude
\begin{multline*}
(t-1)^{|V(G(W))|}\trace(QX_{\ell_1}QX_{\ell_2}\cdots QX_{\ell_\tau})\\
=(t-1)^{|V(G(W))|}\trace(RSR^{-1}X_{\ell_1}RSR^{-1}X_{\ell_2}\cdots RSR^{-1}X_{\ell_\tau})\\
=(t-1)^{|V(G(W))|}\trace\big((SR^{-1}X_{\ell_1}R)(SR^{-1}X_{\ell_2}R)\cdots (SR^{-1}X_{\ell_\tau}R)\big).
\end{multline*}
Because $S$ has two rows of $0$'s this simplifies the matrices that we have to deal with.  In particular we have
\begin{align*}
SR^{-1}X_{\tt P}R&=\left(\begin{array}{cc}Y_{\tt P}&Z_{\tt P}\\O&O\end{array}\right),\\
SR^{-1}X_{\tt C}R&=\left(\begin{array}{cc}Y_{\tt C}&Z_{\tt C}\\O&O\end{array}\right),\text{ and}\\
SR^{-1}X_{\tt E}R&=\left(\begin{array}{cc}Y_{\tt E}&Z_{\tt E}\\O&O\end{array}\right),
\end{align*}
where if we let $u:=t-1$ then
\begin{align*}
Y_{\tt P}&=\left(\begin{array}{cc}\frac{16k^2u^4 + 32ku^4 - 8k^2u^2 + 16u^4 - 8ku^2 + k^2 - u^2}{12(k+1)^2u^4}&\frac{-8k^2u^4 - 16ku^4 - 2k^2u^2 - 8u^4 - 2ku^2 + k^2 - u^2}{6(k+1)^2u^4}\\[5pt]
\frac{8k^2u^4 + 16ku^4 + 2k^2u^2 + 8u^4 + 2ku^2 - k^2 + u^2}{24(k+1)^2u^4}&\frac{16k^2u^4 + 32ku^4 - 8k^2u^2 + 16u^4 - 8ku^2 + k^2 - u^2}{12(k + 1)^2u^4
}\end{array}\right)\\
Y_{\tt C}&=\left(\begin{array}{cc}
\frac{16k^2u^2 + 32ku^2 - 16k^2 + 16u^2 - 8*k - 1}{12(k+1)^2u^2}& \frac{-8k^2u^2 - 16ku^2 + 8k^2 - 8u^2 - 2k - 1}{6(k+1)^2u^2}\\[5pt]
\frac{8k^2u^2 + 16ku^2 - 8k^2 + 8u^2 + 2k + 1}{24(k+1)^2u^2}&\frac{-4k^2u^2 - 8ku^2 + 4k^2 - 4u^2 - 4k + 1}{12(k+1)^2u^2}
\end{array}\right)\\
Y_{\tt E}&=\left(\begin{array}{cc}\frac{16u^2-1}{12u^2}&\frac{-8u^2-1}{6u^2}\\[5pt]\frac{8u^2+1}{24u^2}&\frac{-4u^2+1}{12u^2}\end{array}\right)
\end{align*}

Because we can carry out block matrix multiplication, we note that the resulting upper left block will be the product of the upper left blocks and that the resulting lower right block will be the all zeroes matrix.  This allows us to conclude the following:
\begin{equation*}
(t-1)^{|V(G(W))|}\trace(QX_{\ell_1}QX_{\ell_2}\cdots QX_{\ell_\tau})
=(t-1)^{|V(G(W))|}\trace(Y_{\ell_1}Y_{\ell_2}\cdots Y_{\ell_\tau})
\end{equation*}

There is no convenient way to find a simple expression for these decompositions as we did for the long cycles.  However, it suffices to show that the toggled words will produce equivalent results, which is what we now show. %At this point it is appearing hopeless that we will be able to reduce this further and get a nice closed form expression as we did for the decompositions with long cycles.  This is for a good reason, it is hopeless to find a nice closed form expression!  However, we don't need a closed form expression, we only need to show that the results for the toggled words are the same.  This is the result of the next statement.

\begin{lemma}\label{lem:short}
Let $W=\ell_1\ell_2\ldots \ell_\tau$ and $W^T=\gamma_1\gamma_2\ldots\gamma_\tau$.  Then
\[
(t-1)^{|V(G(W))|}\trace(Y_{\ell_1}Y_{\ell_2}\cdots Y_{\ell_\tau}) = 
(t-1)^{|V(G(W^T))|}\trace(Y_{\gamma_1}Y_{\gamma_2}\cdots Y_{\gamma_\tau}).
\]
\end{lemma}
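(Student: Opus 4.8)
Toggling only interchanges the letters {\tt P} and {\tt C} and leaves every {\tt E} fixed, and the {\tt P} and {\tt C} modules have the same number of vertices (four, two of them signed), so $|V(G(W))| = |V(G(W^T))|$. The two powers of $t-1$ in the statement are therefore literally equal, and the lemma reduces to the trace identity
\[
\trace(Y_{\ell_1}Y_{\ell_2}\cdots Y_{\ell_\tau}) = \trace(Y_{\gamma_1}Y_{\gamma_2}\cdots Y_{\gamma_\tau}).
\]

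The plan is to produce a single invertible $2\times 2$ matrix $M$ over the field $\mathbb{Q}(k,t)$ such that $MY_{\tt P}M^{-1} = Y_{\tt C}$, $MY_{\tt C}M^{-1} = Y_{\tt P}$, and $MY_{\tt E}M^{-1} = Y_{\tt E}$. Given such an $M$ one gets $Y_{\gamma_i} = MY_{\ell_i}M^{-1}$ for every $i$ (one of the three relations applies to each letter of the word), so the conjugations telescope, $Y_{\gamma_1}\cdots Y_{\gamma_\tau} = M(Y_{\ell_1}\cdots Y_{\ell_\tau})M^{-1}$, and the traces agree by cyclic invariance. Since both sides of the asserted identity are rational functions of $k$ and $t$, it is enough to argue this on the Zariski-dense set of parameter values where $M$ is invertible.

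To construct $M$, observe that the third relation requires $M$ to commute with $Y_{\tt E}$, hence (as $Y_{\tt E}$ has distinct eigenvalues over $\mathbb{Q}(t)$) $M = pI + qY_{\tt E}$; imposing in addition that $M^2$ be a scalar matrix — which, by centrality, collapses the first two relations into the single relation $MY_{\tt P} = Y_{\tt C}M$ — forces $M = I - 2Y_{\tt E}$ up to scaling, so I would take this $M$. From the displayed matrix $Y_{\tt E}$ one reads off $\trace Y_{\tt E} = 1$ and $\det Y_{\tt E} = \tfrac{1}{4(t-1)^2}$, so $\trace M = 0$ and, by Cayley--Hamilton, $M^2 = \bigl(1 - \tfrac{1}{(t-1)^2}\bigr)I$, a nonzero scalar matrix; hence $M$ is invertible over $\mathbb{Q}(k,t)$, the identity $MY_{\tt E}M^{-1} = Y_{\tt E}$ is automatic, and $MY_{\tt P} = Y_{\tt C}M$ gives both $MY_{\tt P}M^{-1} = Y_{\tt C}$ and $MY_{\tt C}M^{-1} = M^2 Y_{\tt P}M^{-2} = Y_{\tt P}$. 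Everything thus comes down to the single matrix identity
\[
(I - 2Y_{\tt E})Y_{\tt P} = Y_{\tt C}(I - 2Y_{\tt E}), \qquad\text{equivalently}\qquad Y_{\tt P} - Y_{\tt C} = 2\bigl(Y_{\tt E}Y_{\tt P} - Y_{\tt C}Y_{\tt E}\bigr).
\]

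The main obstacle is verifying this last identity: substituting the explicit $Y_{\tt P}$, $Y_{\tt C}$, $Y_{\tt E}$ reduces it to a handful of identities between rational functions in $k$ and $t$, which is routine but bulky. A useful sanity check to run first is that $Y_{\tt P}$ and $Y_{\tt C}$ have equal trace and equal determinant — necessary for the conjugacy $MY_{\tt P}M^{-1} = Y_{\tt C}$, and enough to confirm the two matrices are conjugate — before grinding through the remaining off-diagonal entries.
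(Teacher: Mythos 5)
Your proof is correct and is essentially the paper's argument: the paper likewise produces a single matrix $U$ intertwining $Y_{\tt P}\leftrightarrow Y_{\tt C}$ and commuting with $Y_{\tt E}$, conjugates through the product, and finishes with cyclicity of the trace and the fact that toggling preserves the vertex count. In fact your $M=I-2Y_{\tt E}$ equals $-\tfrac{1}{12(t-1)^2}\,U$ for the paper's explicit $U$, so the one identity you defer to computation, $MY_{\tt P}=Y_{\tt C}M$, is precisely the identity $UY_{\tt P}=Y_{\tt C}U$ that the paper also just asserts can be checked by matrix multiplication; your derivation of $M$ from the commutant of $Y_{\tt E}$ is a nice conceptual improvement over pulling $U$ out of thin air.
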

\begin{proof}
Both sides are polynomials, and so it suffices to verify that the relationship holds for $t\ne0,1,2$ (i.e., if two polynomials agree at all but three points, they must agree everywhere).  To show that they are equal, we will make use of the following special matrix,
\[
U=\left(\begin{array}{cc}
20u^2-2&-32u^2-4\\
8u^2+1&-20u^2+2
\end{array}\right).
\]
This matrix has the following special properties, which can be verified by carrying out matrix multiplication:
\begin{itemize}
\item $UY_{\tt P}=Y_{\tt C}U$.
\item $UY_{\tt C}=Y_{\tt P}U$.
\item $UY_{\tt E}=Y_{\tt E}U$.
\end{itemize}
These properties are key, in that they indicate we can pass $U$ through one of the $Y_{*}$ matrices but we need to change the matrix in the same way that we do in the toggling operation.

For $t\ne 0,1,2$ we have that $U$ is invertible and so by repeated application of the above properties we have
\begin{align*}
(t-1)^{|V(G(W))|}\trace(Y_{\ell_1}Y_{\ell_2}\cdots Y_{\ell_\tau})
&=(t-1)^{|V(G(W))|}\trace(UY_{\ell_1}Y_{\ell_2}\cdots Y_{\ell_\tau}U^{-1})\\
&=(t-1)^{|V(G(W))|}\trace(Y_{\gamma_1}UY_{\ell_2}\cdots Y_{\ell_\tau}U^{-1})\\
&=(t-1)^{|V(G(W))|}\trace(Y_{\gamma_1}Y_{\gamma_2}U\cdots Y_{\ell_\tau}U^{-1})\\
&=\cdots\\
&=(t-1)^{|V(G(W))|}\trace(Y_{\gamma_1}Y_{\gamma_2}\cdots U Y_{\ell_\tau}U^{-1})\\
&=(t-1)^{|V(G(W))|}\trace(Y_{\gamma_1}Y_{\gamma_2}\cdots Y_{\gamma_\tau}UU^{-1})\\
&=(t-1)^{|V(G(W))|}\trace(Y_{\gamma_1}Y_{\gamma_2}\cdots Y_{\gamma_\tau})\\
&=(t-1)^{|V(G(W^T))|}\trace(Y_{\gamma_1}Y_{\gamma_2}\cdots Y_{\gamma_\tau}),
\end{align*}
where in the last we use that toggling does not change the number of vertices in the graph.
\end{proof}

\begin{proof}[Proof of Theorem~\ref{thm:main}]
To show that the graphs $G(W)$ and $G(W^T)$ are cospectral we can show that they have the same characteristic polynomial.  We use Proposition~\ref{prop:charpoly} and consider all the possible decompositions.  Lemma~\ref{lem:long} shows that the sum over all the decompositions which contain a long cycle are equal while Lemma~\ref{lem:short} shows that the sum over all the decompositions which do not contain a long cycle are also equal.  Thus the sum over all decompositions is equal, and the theorem is established.
\end{proof}

\section{Weighted Graphs to Simple Graphs}\label{sec:weighted}
We have considered graphs with edge weights in terms of a parameter $k$ as shown in Figure~\ref{modules}.  By letting $k=1$ and restricting to {\tt P} and {\tt C} modules we will produce cospectral simple graphs.

Simple graphs can also be obtained by appropriately ``blowing up'' our graph.  This works by replacing vertices by independent sets.  An edge between $u$ and $v$ which has been replaced by $r$ and $s$ vertices respectively then becomes a complete bipartite graphs between the two independent sets with all edge weights $w(u,v)/rs$.  (Note that $r$ and $s$ are generally chosen so that this new edge weight is $1$, i.e., so the new graph is a simple graph.)  Similarly several consecutive {\tt E} edges with weight $k+1$ can become $k+1$ parallel paths.  A discussion on how eigenvalues for the normalized Laplacian work for blowups can be found in \cite{twins}.  In particular, it is known that the eigenvalues of the blowups are determined from the eigenvalues of the original graphs (which we have shown to be cospectral) and the remaining eigenvalues will come from the blowup procedure, which will be the same for both graphs.

As an example in Figure~\ref{blowup} we start with the cospectral graphs 
$\tt{EEEPCC}$ and $\tt{EEECPP}$.  This figure also contains the blowups which result by replacing the unsigned vertices in {\tt C} and {\tt P} modules with $k$ independent vertices (marked by putting $k$ inside the vertex and making the lines bold to represent complete bipartite graphs), the three consecutive {\tt E} edges become $k+1$ parallel paths of length three.  In particular, the resulting blowups are simple graphs which are also cospectral.

\begin{figure}[htb]
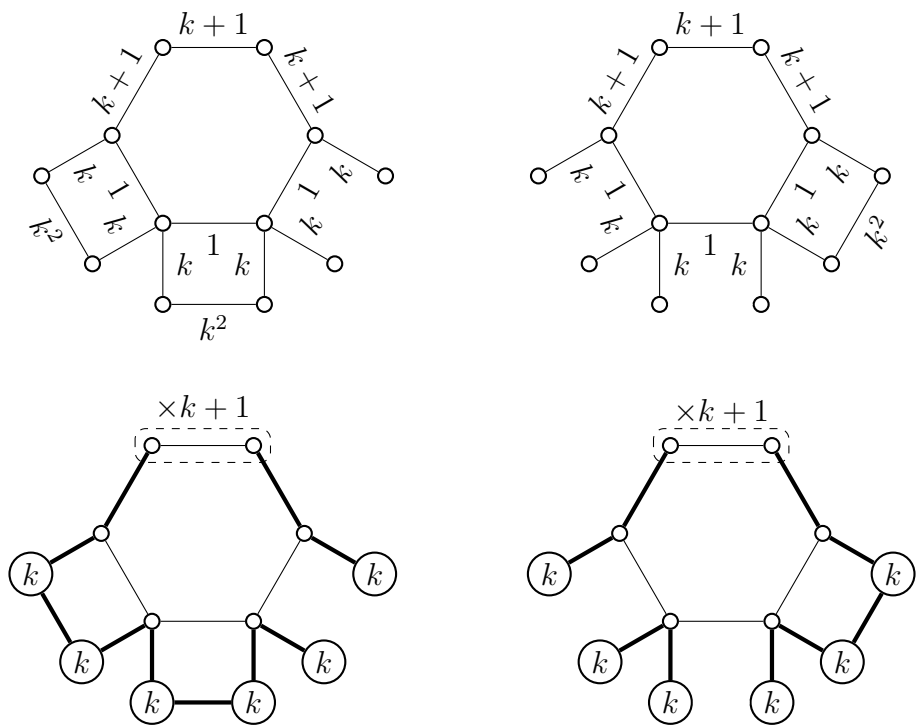

\centering
\picEEE
\caption{The graphs $\tt{EEEPCC}$ and $\tt{EEECPP}$ (above) and their respective blowups (below).}
\label{blowup}
\end{figure}

There are other possibilities.  For instance, from the definition of the normalized Laplacian we note that the matrix does not change if we scale all edge weights by a fixed amount.  So we can first scale the edge weights and then perform a blowup.  A partial example of this is shown in Figure~\ref{singleE} where we consider the graph corresponding to {\tt ECC}.  By setting $k=1$ and then scaling all edge weights by $2$ we get a weighted graph which has as a blowup the graph shown on the right in Figure~\ref{singleE}.  By a similar process we could also do the same for {\tt EPP} to construct a cospectral pair of simple graphs.

\begin{figure}[htb]
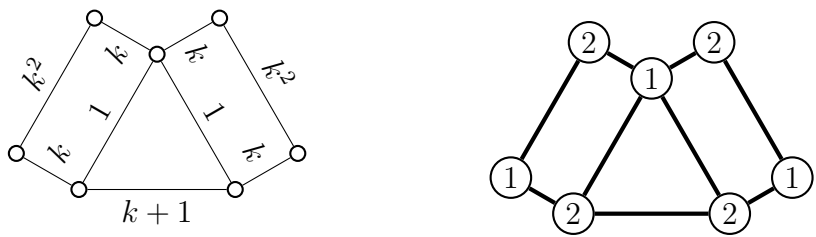

\centering
\picGGG
\caption{The graph $\tt{ECC}$ and corresponding blowup, where $k=1$ and all edge weights have been scaled by $2$.}
\label{singleE}
\end{figure}

\section{Conclusion}
Many, if not most, approaches to establish cospectrality rely on showing that a small perturbation in the graph corresponds to a small, controllable perturbation in the eigenvectors and hence eigenvalues are preserved.  This was \emph{not} the case in this construction, which is why we considered the characteristic polynomials.  Also, while we show that the characteristic polynomials are equal, we never explicitly computed one.  Instead, we showed that the method to determine these polynomials will produce the same answer for a pair of cospectral graphs.  It would be interesting to find additional families where this can occur.

We have been able to establish a large family of cospectral graphs for the normalized Laplacian (and hence also probability transition matrix) which have unusual properties, including cospectral graphs with differing number of edges and graphs cospectral with subgraphs.  There is a vast amount about the spectrum of the normalized Laplacian that is not well understood.  We hope to see more of this area explored in future work.

\bigskip

\noindent\textbf{Acknowledgment.}  The work on this paper was partially conducted while the authors were visiting the Institute for Mathematics and its Applications.

\end{document}